\numberwithin{equation}{section} \numberwithin{figure}{section}
\numberwithin{table}{section} \setlength{\oddsidemargin}{0in}
\theoremstyle{plain}
\theoremstyle{definition}
\newtheorem{thm}{Theorem}
\newtheorem{rem}{Remark}
\numberwithin{equation}{section} \numberwithin{lem}{section}
\numberwithin{thm}{section} \numberwithin{cor}{section}
\numberwithin{pro}{section} \numberwithin{rem}{section}
\begin{document}

\title[A Liouville type result for a class of  variational  elliptic systems]{A Liouville type result for bounded, entire solutions to
a class of variational semilinear elliptic systems}



\author{Christos Sourdis} \address{Department of Mathematics and Applied Mathematics, University of
Crete.}
              \email{csourdis@tem.uoc.gr}           




\subjclass{Primary: 35J20, 35J91; Secondary: 35J47.}

\keywords{elliptic systems, Liouville theorem, Ginzburg-Landau
equation, monotonicity formula, multi-phase transitions}

\maketitle

\begin{abstract}
We prove a Liouville type result for bounded, entire solutions to
a class of variational semilinear elliptic systems, based on the
growth of their potential energy over balls with growing radius.
Important special  cases to which our result applies are  the
Ginzburg-Landau system  and systems that arise in the study of
multi-phase transitions.
\end{abstract}

We consider the semilinear elliptic system
\begin{equation}\label{eqEq}
\Delta u= W_u(u)\ \ \textrm{in}\ \ \mathbb{R}^n,\ \ n\geq 2,
\end{equation}
where  $W:\mathbb{R}^m\to \mathbb{R}$, $m\geq 2$, is sufficiently
smooth and
\[
W\geq 0.
\]
Under these general assumptions, it is well known (see for
instance \cite{alikakosBasicFacts,bethuelBIG}) that solutions of
(\ref{eqEq}) satisfy the following monotonicity property:
\begin{equation}\label{eqmonotoniWeak}
\frac{d}{dr}\left(\frac{1}{r^{n-2}}\int_{B_r}^{}\left\{\frac{1}{2}|\nabla
u|^2+ W\left(u\right) \right\}dx\right)\geq 0,\ \ r>0,
\end{equation}
where $B_r$ stands for the $n$-dimensional ball of radius $r$ and
center at the origin (keep in mind that (\ref{eqEq}) is
translation invariant). As a direct consequence, we have the
following Liouville type theorem:
\\
\textbf{Liouville property A} \emph{The only solutions of
(\ref{eqEq}) with finite energy, that is
\begin{equation}\label{eqenerWhole}
\int_{\mathbb{R}^n}^{}\left\{\frac{1}{2}|\nabla u|^2+
W\left(u\right) \right\}dx<\infty,
\end{equation}
are the constants ones.}
\\
(The case $n=2$ requires some extra work, see
\cite{alikakosBasicFacts}).

 If $m=1$, bounded solutions of (\ref{eqEq}) satisfy
the gradient bound:
\begin{equation}\label{eqmodica}
\frac{1}{2}|\nabla u|^2\leq W(u) \ \ \textrm{in}\ \ \mathbb{R}^n,
\end{equation}
(see\ \cite{cafamodica,farinaFlat,modica}), and thus a stronger
Liouville property holds:
\\
\textbf{Liouville propertly B} \emph{The only bounded solutions of
(\ref{eqEq}) with finite potential energy, that is
\begin{equation}\label{eqpotFin}\int_{\mathbb{R}^n}^{} W\left(u\right) dx<\infty,
\end{equation}
are the constant ones.}
\\
In fact, a stronger monotonicity formula holds  with $n-1$ in
place of $n-2$ in (\ref{eqmonotoniWeak}) (see
\cite{cafamodica,modicaProc}).

If $m\geq 2$, the analog of the gradient bound (\ref{eqmodica})
does not hold in general, as is indicated by the counterexamples
in \cite{farinatwores,smyrnelis}.

Despite of this, there are such systems where the Liouville
property B is in effect. Indeed, let us consider the systems,
arising in the study of multi-phase transitions (see
\cite{baldo}), where $W$ has a finite number of global minima
which, in addition, are non-degenerate. Let $u$ be a bounded
solution of (\ref{eqEq}) such that (\ref{eqpotFin}) holds. Using
that $|\nabla u|$ is also bounded (by standard elliptic estimates
\cite{Gilbarg-Trudinger}) in the same way as in
\cite{brezisMerle}, and exploiting that the global minimizers of
$W$ are isolated, we find that there exists a global minimizer of
$W$, say $a\in \mathbb{R}^m$, such that
\begin{equation}\label{eqphase}
\left|u(x)-a \right|\to 0\ \ \textrm{as}\ \ |x|\to \infty.
\end{equation}
In turn, the non-degeneracy of $a$ yields that
\[
\left|u(x)-a \right|+|\nabla u|\leq C_0e^{-C_1|x|}, \ \ x\in
\mathbb{R}^n,
\]
for some constants $C_0,C_1>0$ (see \cite{gui}). The above
relation clearly implies that (\ref{eqenerWhole}) holds and,
thanks to the Liouville property A, we conclude that $u\equiv a$.

In the case where the global minima of $W$ are not isolated, the
previous simple argument fails. Nevertheless, it was shown in
\cite{farinatwores} that the Liouville property B holds for the
Ginzburg-Landau system
\begin{equation}\label{eqGL}
\Delta u=\left(|u|^2\!-\!1 \right)u,   \   (\textrm{here}\
W(u)=\frac{\left(1\!-\!|u|^2\right)^2}{4}\ \textrm{vanishes on}\
\mathbb{S}^{m\!-\!1}  ),
\end{equation}
arising in superconductivity, provided that $n\geq 4$. On the
other hand, this property does not hold when $n=2$ (see
\cite{brezisMerle}), whereas it was shown to hold if $n=3$ and
$m=2$ in \cite{farinaWorld}. The approach in \cite{farinatwores}
is based on standard elliptic estimates and the monotonicity
formula (\ref{eqmonotoniWeak}). On the other side, let us mention
that for this system one can control the growth of the potential
energy (over growing balls) in terms of the corresponding growth
of the kinetic energy (see \cite{brezisMerle,farinaDiffEqs}).

In this note, we will prove that a stronger assertion  than the
Liouville property B holds for a broad class of systems, which
includes both the phase transition and the Ginzburg-Landau system
that we mentioned above.   As will be apparent, another advantage
of our proof is its simplicity.


Our main result is the following.


\begin{thm}\label{thm1}
Assume that $W\in C^1(\mathbb{R}^m;\mathbb{R})$ is nonnegative and
satisfies
\begin{equation}\label{eqASS}
-(u-Q)\cdot  W_u(u)\leq C_2 \left(W(u)\right)^{\frac{p-1}{p}},\ \
u\in \mathbb{R}^m,
\end{equation}
for some $Q\in \mathbb{R}^m$, $p\geq 2$ and a constant $C_2>0$
(here $\cdot$ stands for the Euclidean inner product of
$\mathbb{R}^m$). Let $u \in C^2(\mathbb{R}^n;\mathbb{R}^m)$ be a
bounded solution of (\ref{eqEq}).
\begin{description}
    \item[I] If $n\geq 4$, the following Liouville property holds:
\emph{\begin{equation}\label{eqpotTHM}
\int_{B_r}^{}W(u)dx=o(r^{q})\ \ \textrm{as}\ \ r\to \infty,
\end{equation}
with
\begin{equation}\label{eqQ}
q=\frac{p}{p-1}\left(n-2-\frac{n}{p} \right)=n-2-\frac{2}{p-1},
\end{equation}
 implies that $u$ is a constant}, (where $o(\cdot)$ is the
standard Landau symbol).

    \item[II]If $n=4$ and $p=2$, the Liouville property B holds, that is (\ref{eqpotFin})
implies that $u$ is a constant.
\end{description}
\end{thm}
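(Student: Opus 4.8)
The plan is to show that the hypotheses force the normalized energy $G(r):=r^{2-n}E(r)$, where $E(r):=\int_{B_r}\{\frac12|\nabla u|^2+W(u)\}\,dx$, to tend to $0$ as $r\to\infty$; since $G$ is non-decreasing by the monotonicity formula (\ref{eqmonotoniWeak}) and non-negative, this yields $E\equiv 0$ and hence that $u$ is constant. First I would record two identities. Multiplying (\ref{eqEq}) by $u-Q$ and integrating over $B_r$ gives
$$\int_{B_r}|\nabla u|^2\,dx=\int_{\partial B_r}(u-Q)\cdot\partial_r u\,dS-\int_{B_r}(u-Q)\cdot W_u(u)\,dx,$$
while a Pohozaev (stress--energy tensor) computation refines (\ref{eqmonotoniWeak}) to the pointwise-in-$r$ identity $G'(r)=r^{2-n}\int_{\partial B_r}|\partial_r u|^2\,dS+2r^{1-n}P(r)\ge 0$, where $P(r):=\int_{B_r}W(u)\,dx$ and $\partial_r u$ is the radial derivative. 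The crucial feature of the first identity is that its boundary term involves only $\partial_r u$, which is exactly the quantity controlled by the monotonicity formula; this is what lets the two identities interact.

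Next I would estimate the two integrals on the right of the energy identity. By the structural assumption (\ref{eqASS}) and Hölder's inequality, $-\int_{B_r}(u-Q)\cdot W_u(u)\,dx\le C_2\int_{B_r}W^{(p-1)/p}\,dx\le C\,P(r)^{(p-1)/p}r^{n/p}=:h(r)$, and the exponent identity in (\ref{eqQ}) is precisely what makes the growth hypothesis (\ref{eqpotTHM}) of Part I give $h(r)=o(r^{n-2})$. For the boundary term, boundedness of $u$ (write $M:=\sup|u-Q|<\infty$) and Cauchy--Schwarz on $\partial B_r$ give $\int_{\partial B_r}(u-Q)\cdot\partial_r u\,dS\le CM\,r^{(n-1)/2}\Phi(r)^{1/2}$ with $\Phi(r):=\int_{\partial B_r}|\partial_r u|^2\,dS$. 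Combining these yields the master inequality $D(r)\le CM\,r^{(n-1)/2}\Phi(r)^{1/2}+h(r)$, where $D(r):=\int_{B_r}|\nabla u|^2$. For Part II ($n=4,\ p=2$) the same inequality holds once one checks that finite total potential energy (\ref{eqpotFin}) still forces $h(r)=o(r^2)$: splitting $\int_{B_r}W^{1/2}$ at a large fixed radius $r_0$ and using Cauchy--Schwarz on the annulus, the tail contributes at most $(\int_{\mathbb{R}^4\setminus B_{r_0}}W)^{1/2}\cdot Cr^2$, whose coefficient is made arbitrarily small by enlarging $r_0$, giving $\limsup_r h(r)/r^2=0$. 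Thus both parts reduce to the single assertion that $h(r)=o(r^{n-2})$ together with the master inequality implies that $u$ is constant.

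The main obstacle is to deduce $G(r)\to 0$ from the master inequality, and the delicate point is excluding $L:=\lim_{r\to\infty}G(r)=+\infty$. Here I would argue by a Riccati-type comparison. If $L=\infty$ then $r^{2-n}D(r)\to\infty$ (since $r^{2-n}P(r)\to 0$ in both regimes), so $h(r)\le\frac12 D(r)$ for large $r$, and the master inequality gives $\Phi(r)\ge c\,D(r)^2r^{1-n}$; inserting this into the refined monotonicity produces $G'(r)\ge c'\,G(r)^2/r$ for large $r$, whose positive solutions blow up at a finite radius, contradicting the finiteness of $G(r)$ for every $r$. Hence $L<\infty$, so $\int^\infty r^{2-n}\Phi(r)\,dr<\infty$, which forces $\liminf_{r\to\infty}r^{3-n}\Phi(r)=0$.

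It then remains to evaluate the master inequality along a sequence $r_k\to\infty$ realizing this liminf: dividing by $r_k^{n-2}$ gives $D(r_k)/r_k^{n-2}\le C(r_k^{3-n}\Phi(r_k))^{1/2}+h(r_k)/r_k^{n-2}\to 0$, whence $G(r_k)=\frac12 r_k^{2-n}D(r_k)+r_k^{2-n}P(r_k)\to 0$, so $L=0$ by monotonicity and therefore $G\equiv 0$, i.e. $u$ is constant. I expect the exclusion of $L=\infty$ to be the heart of the argument, with the secondary subtlety being the insistence that the boundary term be written in terms of $\partial_r u$ alone so that the monotonicity formula applies. I would note finally that the restriction $n\ge 4$ enters only to guarantee $q\ge 0$ in (\ref{eqQ}) for every $p\ge 2$, so that the hypothesis (\ref{eqpotTHM}) is not vacuous (for $q<0$ the monotone non-negative quantity $P(r)$ would be forced to vanish identically); the estimates above are otherwise dimension-uniform.
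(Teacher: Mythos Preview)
Your argument is correct and shares the paper's overall architecture (test the equation against $u-Q$, bound the bulk term via (\ref{eqASS}) and H\"older, select a good sequence of radii, feed the result into the monotonicity formula). The one genuine difference is in how the boundary integral $\int_{\partial B_r}(u-Q)\cdot\partial_r u\,dS$ is controlled. The paper rewrites it as $\tfrac12 r^{n-1}I'(r)$ with $I(r)=r^{1-n}\int_{\partial B_r}|u-Q|^2\,dS$; since $u$ is bounded, $I$ is bounded, and hence there is a sequence $r_j\to\infty$ with $r_jI'(r_j)\to 0$ (otherwise $|I'|\ge\delta/r$ eventually and $I$ would be unbounded). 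This one-line observation produces the good subsequence \emph{before} any appeal to monotonicity, so the paper never needs the refined Pohozaev identity and never has to exclude $L=+\infty$; only the weak form (\ref{eqmonotoniWeak}) is used at the very end. Your route---Cauchy--Schwarz to bound the boundary term by $\Phi(r)=\int_{\partial B_r}|\partial_r u|^2$, the refined monotonicity to relate $\Phi$ to $G'$, a Riccati comparison $G'\ge cG^2/r$ to rule out $L=+\infty$, and then extraction of the subsequence from $\int^\infty r^{2-n}\Phi<\infty$---is valid but noticeably heavier. What it buys is that the boundary control is derived from the monotonicity formula itself rather than from a separate observation about spherical averages, so the argument is in a sense more self-contained; what the paper's trick buys is brevity and the avoidance of the Riccati step and of the precise form of $G'$. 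For Part~II the two treatments essentially coincide: both split at a large fixed radius to exploit the smallness of the potential-energy tail and thereby obtain $h(r)=o(r^2)$.
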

\begin{proof}
We consider first the Case I. Without loss of generality, we may
assume that $Q=0$. The main effort will be placed in estimating
the corresponding growth of the kinetic energy. To this end, we
will adapt an argument from \cite{weiWeth}. We take the inner
product of (\ref{eqEq}) with $u$ and integrate by parts over
$B_r$, $r>0$, to arrive at
\begin{equation}\label{eqBas}
\int_{B_r}^{}|\nabla u|^2 dx=-\int_{B_r}^{} u\cdot  W_u(u) dx+
\int_{\partial B_r}^{}u\cdot u_r dS,
\end{equation}
where $u_r$ denotes the radial derivative of $u$. The first term
in the righthand side will be estimated initially  by
(\ref{eqASS}). Concerning the second term, letting
\[
I(r)=\frac{1}{r^{n-1}}\int_{\partial B_r}^{}|u|^2 dS,
\]
we note that
\[
I'(r)=\frac{2}{r^{n-1}}\int_{\partial B_r}^{}u\cdot u_r dS,
\]
(see also \cite[Ch. 6]{colding}). Now, from equation
(\ref{eqBas}),  we obtain
\begin{equation}\label{eqBas2}\begin{array}{rcl}
  \int_{B_r}^{}|\nabla u|^2 dx & \leq & C_2\int_{B_r}^{}  \left(W(u)\right)^{\frac{p-1}{p}} dx+\frac{1}{2}r^{n-1}I'(r) \\
    &   &   \\
    & \leq & C_3 r^{\frac{n}{p}} \left(\int_{B_r}^{} W(u)dx \right)^{\frac{p-1}{p}}+\frac{1}{2}r^{n-1}I'(r) \\
   &   &   \\
\textrm{using}\ (\ref{eqpotTHM}) & \leq  & o(r^{n-2})+\frac{1}{2}r^{n-1}I'(r), \\
\end{array}
\end{equation}
for some constant $C_3>0$, as $r\to \infty$.

Next, we claim that there exists a sequence $r_j\to \infty$ such
that
\[
r_j I'(r_j)\to 0.
\]
If not, without loss of generality, there would exist  positive
constants $\delta$, $R_0$ such that
\[
I'(r)\geq \frac{\delta}{r},\ \ r\geq R_0,
\]
which implies that
\[
I(r)\geq I(R_0)+\delta\ln\left(\frac{r}{R_0}\right),\ \ r\geq R_0.
\]
However, this is not possible  since $I$ is a bounded function
(from the assumption that $u$ is bounded).

To conclude, we take $r=r_j$ in (\ref{eqBas2}) to find that
\[
\int_{B_{r_j}}^{}|\nabla u|^2 dx\leq o(r_j^{n-2})\ \ \textrm{as}\
j\to \infty.
\]
Clearly, the monotonicity formula (\ref{eqmonotoniWeak}), the
assumption (\ref{eqpotTHM}) and the above relation yield that $u$
is a constant, as desired.

The proof in the Case II requires some minor modifications. Again
we may assume that $Q=0$. Motivated from \cite{farinatwores}, we
now integrate the relation $u\cdot \Delta u= u \cdot W_u(u)$ over
the annulus  $B_t \setminus B_s$ with $t>s$. Then, instead of
(\ref{eqBas2}), we  have
\[
\int_{B_t\setminus B_s}^{}|\nabla u|^2dx\leq C_3 t^2
\left(\int_{\mathbb{R}^n\setminus
B_s}^{}W(u)dx\right)^\frac{1}{2}+O(s^{3})+\frac{1}{2}t^{3}I'(t),
\]
for all $t>s>0$, where we also used that $|\nabla u|$ is bounded
in $\mathbb{R}^n$ (by standard elliptic estimates
\cite{Gilbarg-Trudinger}). In turn, this implies that
\[
\int_{B_t}^{}|\nabla u|^2dx\leq C_3 t^2
\left(\int_{\mathbb{R}^n\setminus
B_s}^{}W(u)dx\right)^\frac{1}{2}+O(s^{4})+\frac{1}{2}t^{3}I'(t),
\]
for all $t>s>1$. As before, for any $s>1$, there exists a sequence
$\{t_j \}$ such that $t_j\to \infty$ and
\begin{equation}\label{eqvia}
\int_{B_{t_j}}^{}|\nabla u|^2dx\leq C_3 t_j^2
\left(\int_{\mathbb{R}^n\setminus
B_s}^{}W(u)dx\right)^\frac{1}{2}+O(s^{4})+o(t_j^2),
\end{equation}
as $j\to \infty$. Let $r>0$ (independent of $s,j$). By the
monotonicity formula (\ref{eqmonotoniWeak}), we obviously have
that
\[
\frac{1}{r^{2}}\int_{B_r}^{}\left\{\frac{1}{2}|\nabla u|^2+
W\left(u\right) \right\}dx \leq
\frac{1}{t_j^{2}}\int_{B_{t_j}}^{}\left\{\frac{1}{2}|\nabla u|^2+
W\left(u\right) \right\}dx,
\]
provided that $j$ is sufficiently large (depending on $r,s$).
Letting $j\to \infty$, via (\ref{eqvia}), we obtain that
\[
\frac{1}{r^{2}}\int_{B_r}^{}\left\{\frac{1}{2}|\nabla u|^2+
W\left(u\right) \right\}dx\leq C_3
\left(\int_{\mathbb{R}^n\setminus
B_s}^{}W(u)dx\right)^\frac{1}{2},
\]
for all $r>0$ and $s>1$. Finally, letting $s\to \infty$ and
recalling (\ref{eqpotFin}), we deduce  that $u$ is a constant.

 The proof of the theorem is complete.
\end{proof}

\begin{rem}
Condition (\ref{eqASS}) holds, with $p=2$ (and for any $Q\in
\mathbb{R}^m$), if $W\geq 0$ is $C^2$ and vanishes on finitely
many smooth, compact manifolds which are non-degenerate, in the
sense that at each point $u$ of such a manifold $M$ we have $T_u
M=\textrm{Ker}\left[W_{uu}(u) \right]$ (where $T_u M$ denotes the
tangent space of $M$ at the point $u$).
\end{rem}

\begin{rem}
Let us revisit the  phase transition systems that we mentioned
previously in relation to (\ref{eqphase}). Condition (\ref{eqASS})
with $p=2$ is clearly satisfied near the global minima of $W$ (for
any $Q\in \mathbb{R}^m$), and thus also in bounded subsets of
$\mathbb{R}^m$. In fact, it is easy to see that the same is true
(with $p\geq 2$) in the more general case where near each zero $a$
of $W$ the following holds: There exist $c>0$ and $p\geq 2$ such
that
\begin{equation}\label{eqdeg}
W(u) \geq c |u-a|^p\ \ \textrm{and}\ \ \left|W_u(u) \right|\leq
\frac{1}{c} |u-a|^{p-1}
\end{equation}
(this is certainly satisfied if $W$ vanishes up to a finite order
at $a$). In passing, let us note that this class of systems
appears in the recent study \cite{smets}. For such $W$, we showed
very recently in \cite{sourdisCPAA} that, for any $k>0$, bounded,
nonconstant solutions of (\ref{eqEq}) satisfy:
\begin{equation}\label{eqcpaa}
\frac{1}{(\ln r)^k}\frac{1}{
r^{n-2}}\int_{B_r}^{}\left\{\frac{1}{2}|\nabla u|^2+
W\left(u\right) \right\}dx\to \infty\ \ \textrm{as}\ \ r\to
\infty,
\end{equation}
(actually, this was shown in the case of nondegenerate minima but
the proof carries over straightforwardly to this degenerate case).
Armed with the above information, and modifying the proof of
Theorem \ref{thm1} accordingly, we obtain the following Liouville
property for bounded solutions to such systems in $n\geq 4$
dimensions:
\[
\int_{B_r}^{}W(u)dx=O\left((\ln r)^k r^{q}\right), \ \ \textrm{for
some}\ k>0,\ \ \textrm{as}\ \ r\to \infty \ \Longrightarrow\ u\
\textrm{is a constant},
\]
where the exponent $q$ is as in (\ref{eqQ}).

We cannot resist to make the following observation. If $W$ has
only one such global minimum, say at the origin, it holds that
\[
\left| u\cdot W_u(u) \right| \leq C_4 W(u)
\]
for some constant $C_4>0$  (at least in  the range of the
considered bounded solution). Therefore, using this to estimate
directly the first term in the righthand side of (\ref{eqBas}),
and modifying slightly the rest of the proof, we infer that a
stronger Liouville property holds for any solution in $n\geq 2$
dimensions:
\begin{equation}\label{eqConj}
\int_{B_r}^{}W(u)dx=o\left(r^{n-2}\right)\ \ \textrm{as}\ \ r\to
\infty \ \Longrightarrow\ u\ \textrm{is a constant}.
\end{equation}
It is natural to conjecture that this property continues to hold
when there are arbitrary many such minima. In this direction, see
Remark \ref{remMinim} below.
\end{rem}

\begin{rem}
Assume that $W$ is nonnegative, sufficiently smooth
($C^{2,\alpha}$ with $0<\alpha<1$ suffices), and there exists an
$M>0$ such that
\[
u\cdot W_u(u)>0, \ \ |u|>M.
\]
Under these assumptions, it was shown recently in
\cite{smyrnelis}, in the spirit of \cite{cafamodica}, that there
exists a constant $C_5>0$ such that all bounded solutions of
(\ref{eqEq}) satisfy the gradient bound:
\[
|\nabla u|^2\leq C_5 \left(M^2-|u|^2 \right)\ \ \textrm{in}\ \
\mathbb{R}^n.
\]
Assume further that there exists a constant $C_6>0$ such that
\[
W(u)\geq C_6\left(M^2-|u|^2 \right)^2,\ \ |u|\leq M.
\]
The above assumptions are clearly satisfied by the Ginzburg-Landau
potential (with $M=1$). Furthermore, as was observed in
\cite{smyrnelis}, they are also satisfied by a class of symmetric
phase transition potentials.

It follows that there is a constant $C_7>0$ such that
\[
\int_{B_r}|\nabla u|^2dx\leq C_7 r^{\frac{n}{2}}\left(
\int_{B_r}W(u) dx \right)^\frac{1}{2}, \ \ r>0,
\]
(compare with (\ref{eqBas2}) for $p=2$). Now, we can argue by
contradiction and use the monotonicity formula
(\ref{eqmonotoniWeak}) to show the following: If $u$ is a bounded,
nonconstant solution of (\ref{eqEq}) in $n\geq 5$ dimensions,
there exists a constant $C_8>0$ such that
\[
\int_{B_r}^{}W(u)dx\geq C_8 r^{n-4}, \ \ r\geq 1.
\]
Notice that the above lower bound represents  a slight improvement
over Theorem \ref{thm1} for this particular class of potentials.

Suppose that, in addition, the potential $W$ is of phase
transition type and satisfies (\ref{eqdeg}). Then, we can exploit
(\ref{eqcpaa}) to obtain the stronger property: Given $k>0$,
nonconstant, bounded solutions in $n\geq 4$ dimensions satisfy
\[
\frac{1}{(\ln r)^k}\frac{1}{ r^{n-4}}\int_{B_r}^{} W\left(u\right)
dx\to \infty\ \ \textrm{as}\ \ r\to \infty.
\]
\end{rem}

\begin{rem}\label{remMinim}
In the case of phase transition potentials with non-degenerate
global minima, it was shown recently in \cite{AlikakosDensity}
that each nonconstant, bounded and local minimizing solution (in
the sense of Morse)  satisfies
\[
\int_{B_r}^{}\left\{\frac{1}{2}|\nabla u|^2+ W\left(u\right)
\right\}dx\geq C_9 r^{n-1}, \ \ r\geq 1,
\]
for some constant $C_9>0$. Using this instead of the monotonicity
formula in the proof of Theorem \ref{thm1} (with $p=2$), we deduce
that such solutions satisfy the Liouville property (\ref{eqConj}).
Interestingly enough, if $n=2$ and under slightly more general
assumptions, it was shown recently in \cite{sourdis14} that such
solutions satisfy
\[
\int_{B_r}^{}W\left(u\right)dx\geq C_{10} r, \ \ r\geq 1,
\]
for some constant $C_{10}>0$. For related Liouville type results
for minimizing solutions, we refer the interested reader to
\cite{fuscoCPAA}.
\end{rem}

 \section*{Acknowledgements} This research was supported by the
ARISTEIA (Excellence) programme ``Analysis of discrete, kinetic
and continuum models for elastic and viscoelastic response'' of
the Greek Secretariat of Research.

\end{document}